\renewcommand{\epsilon}{\varepsilon}
\newtheorem{theorem}{Theorem}[section]
\newtheorem{prop}[theorem]{Proposition}
\newtheorem{cor}[theorem]{Corollary}
\newtheorem{lemma}[theorem]{Lemma}
\newtheorem{remark}[theorem]{Remark}
\newcommand{\R}{\mathbb{R}}
\newcommand{\C}{\mathbb{C}}
\newcommand{\N}{\mathbb{N}}
\title[Solving polynomials with ODE]{Solving polynomials with\\ ordinary differential
equations}
\subjclass[2010]{Primary: 26C05; Secondary: 12D10;  12E12; 13P15;
33C75; 33E05.}
\keywords{Polynomial equation; Ordinary differential equation; Abel
equations; Elliptic and hyperelliptic integrals.}
\author[A. Gasull]{Armengol Gasull}
\address{Departament de Matem\`{a}tiques,
Universitat Aut\`{o}noma de Barcelona, Edifici C 08193 Bellaterra, Barcelona, Spain.} \email{gasull@mat.uab.cat}
\author[H. Giacomini]{Hector Giacomini}
\address{ Institut Denis Poisson. Universit\'{e} de Tours,  C.N.R.S. UMR 7013. 37200 Tours, France.} \email{Hector.Giacomini@lmpt.univ-tours.fr}
\date{}
\begin{document}

\begin{abstract} In this work we consider a given root of
a family of $n$-degree polynomials as a one-variable  function that
depends only on the independent term. Then we prove that this
function satisfies several  ordinary differential equations (ODE).
More concretely, it satisfies several simple separated variables
ODE, a first order generalized Abel ODE of degree $n-1$ and an
$(n-1)$-th order linear ODE. Although some of our results are not
new, our approach is simple and self-contained.
For $n=2, 3$ and $4$ we recover, from these ODE, the classical formulas for
solving these polynomials.
\end{abstract}

\maketitle

 \section{Introduction and main results}

It is known that although general polynomial equations of degree
$n\ge5$ can not be solved by radicals, their roots can be obtained
in terms of elliptic or hyperelliptic functions, their inverses or
other trascendental functions, like hypergeometric  or theta
functions. This is a classical subject which starts with results of
Hermite, Kronecker and  Brioschi and continues with contributions of
many others authors, see for instance \cite{bel,Di,Ki,PS} and the
references therein. We will not try to survey all the different points of
view from which the question of solving polynomials is addressed.

Because we usually work on ordinary differential equations (ODE) we
simply decided to explore which kind of results about polynomial
equations can be obtained by using ODE as a main tool. As we will
see, our results are self-contained and recover some of the known
results on the subject. Before we state our
contributions and compare them with these
known results, we present a brief survey of the most relevant
results that we have found on this subject that also use ODE as a
main tool.
To the best of our knowledge this approach started with the
contributions of Betti (\cite{be})  in 1854  and the ones around
1860 of Cockle and Hartley (\cite{coc,har}). In fact, Enrico Betti
proved that the solutions of general polynomial equations satisfy a
separated variables ODE and using this fact that he proved that the solutions of
these equations can be obtained in terms of hyperelliptic functions
and their inverses. He also proved that for quintic equations it
suffices to consider elliptic functions and their inverses. On the
other hand, James Cockle and Robert Harley showed explicit linear ODE
satisfied for a solution of an arbitrary trinomial polynomial equation in terms of its coefficients.
 For instance, they found a linear homogeneous
ODE of $4$-th order for a solution $x(q)$ of the quintic polynomial
equation  in the Bring-Jerrard form $x^5-x+q=0.$ These results are
presented and extended a litle in the 1865
Boole's book \cite[pp. 190--199]{boo}.
In his Thesis (``premi\`{e}re th\`{e}se''), published as a book in 1874 and as an article
 in 1875 (\cite{tan}), Tannery consider a more general question and proved that each branch
$y=y(x)$ of an algebraic curve $F(x,y)=0,$ of degree $n$ in $y$
satisfies an $n$-th order linear homogeneous ODE.
In 1887, Heymann (\cite{hey}) showed that a solution of a
trinomial equation with only one parameter satisfies a linear ODE and realizes
 for the first time that
this solution can be expressed as a hypergeometric function. Mellin
(\cite{mel}) around 1915 published the quoted paper and others about
the representation of the solutions of polynomial equations in terms
of multiple integrals, proving again that the solution of a trinomial
equations satisfy a linear ODE.
Owing to the complexity of explicitly
constructing  these ODE for
non-trinomial polynomial equations, some authors decided to use
also functions of several variables, and their corresponding partial
differential equations, to express the roots of arbitrary polynomial
equations. Bellardinelli, in his extensive review work of 1960 (\cite{bel}) presents this type of results and discuss also the previous works about ODE.

We want to stress that results like
the one of Tannery have practical applications when people deal with
some generating functions appearing in combinatorial problems, see
for instance the nice paper of Comtet(\cite{com}) and his classical
book on Combinatorics (\cite{com2}). There, the author gives also a
proof that the branches of algebraic curves satisfy linear ODE and
several applications of this fact.

In a few words, in this paper we will recover with independent
proofs some of the above results and give a few new ones. More
concretely, we will re-obtain the results of Betti as a particular case of our more general result and we will give a
simple and constructive procedure to obtain non homogeneous $(n-1)$-order linear ODE
satisfied by the solutions of general polynomial equations of degree
$n.$ During the way, we prove that these solutions satisfy also Abel type
polynomial differential equations of degree $n-1.$ We also apply all
our results for small $n$ and to some particular examples. It is
funny to observe for instance that we obtain the celebrated
Cardano's formula for $n=3,$ by reducing the computations to solve
the simple second order ODE of the harmonic oscillator $x''=-x.$

Before stating our main results, we start giving an idea of our
approach to the problem. In principle, we do not consider all real
polynomials, but generic ones. Let $R(x)$ be a monic degree $n$ real
polynomial satisfying $R(0)=0$ and $R'(0)\ne0.$ We are interested
into the degree $n$ polynomial equation
\begin{equation}\label{eq:main}
P(x)=R(x)-q=0,
\end{equation}
$q\in\R,$ and more in particular, in finding a local explicit
expression of the analytic solution $x(q)$ of \eqref{eq:main} such
that $\lim_{q\to0} x(q)=0,$ which exists and is unique by the
implicit function theorem because $R'(0)\ne0.$

Let $F$ be any invertible diffeomorphism such that $F(0)=0.$ Then,
from \eqref{eq:main} we have the locally equivalent equation $F(
R(x))= F (q).$ We know that $F ( R( x(q)) )=F(q).$ By derivating it
with respect  to $q,$ and multiplying both sides by $G(R(x(q))=
G(q),$ with $G$ an arbitrary continuous function, we obtain an ODE
with separated variables for $x(q)$
\[
G(R(x(q))F'( R( x(q))) R' (x(q)) x'(q) = G(q) F'(q),
\]
with initial condition $x(0)=0.$ It can be easily solved giving rise
to
\begin{equation}\label{eq:edo}
\phi(x):=\int_0^x G(R(s))F'( R( s)) R' (s)\, {\rm d} s= \int_0^q
G(t) F'(t)\, {\rm d} t=:\varphi(q).
\end{equation}
Hopefully, this new equation $\phi(x)=\varphi(q),$ which is
equivalent to $R(x)=q,$ allows to obtain $x=\phi^{-1}(\varphi(q))$
being $\phi^{-1}\circ \varphi$ an explicit computable function that
it is not trivially equivalent to $x=R^{-1}(q).$ A key point is to
chose a suitable $F$ that provides {\bf some cancellation} in the
expression $F'( R( x) ) R' (x).$

As we will see, this wished cancellation  can be obtained from the
following result. Here $\operatorname{dis}_x (P(x))$ denotes the
discriminant of $P(x)$ with respect to $x,$ see \cite{St}.

\begin{prop}\label{propo}
Let $P(x)=R(x)-q$ be a real polynomial of degree $n\ge2,$ with
$R(0)=0$ and $q\in\R.$  Set $D(q)=\operatorname{dis}_x (P(x)).$ Then
\begin{equation}\label{eq:dec}
D(R(x))= (R'(x))^2 U(x)= (P'(x))^2 U(x),
\end{equation}
for some polynomial $U$ of degree $(n-1)(n-2).$ Moreover, if all the
roots of $R$ are simple, $U(0)\ne0.$
\end{prop}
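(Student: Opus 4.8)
The plan is to recall the resultant formula for the discriminant and substitute $q = R(x)$ so that $P$ acquires the obvious repeated root. Write $\operatorname{dis}_x(P) = \frac{(-1)^{n(n-1)/2}}{a_n}\operatorname{Res}_x(P,P')$ where $a_n=1$ here, and recall that $\operatorname{Res}_x(P,P') = \prod_i P'(\rho_i)$ over the roots $\rho_i$ of $P$ (up to sign), equivalently $\operatorname{Res}_x(P,P') = \prod_{i<j}(\rho_i-\rho_j)^2$. The key observation is that $P(x) = R(x) - q$ and $P'(x) = R'(x)$, so $P$ and $P'$ depend on $q$ only through the constant term of $P$. Now specialize the identity $D(q) = \operatorname{dis}_x(R(x)-q)$ at the value $q = R(x_0)$ for an indeterminate $x_0$: then the polynomial $t \mapsto R(t) - R(x_0)$ has $x_0$ as a root, and its derivative at $x_0$ is $R'(x_0)$.

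The main step is to extract the factor $(R'(x_0))^2$. I would use the representation $D(q) = \pm \operatorname{Res}_t\big(R(t)-q,\, R'(t)\big)$ and, viewing $R(t) - R(x_0) = (t - x_0) Q(t, x_0)$ for a polynomial $Q$ with $Q(x_0,x_0) = R'(x_0)$, apply multiplicativity of the resultant in its first argument:
\[
\operatorname{Res}_t\big(R(t)-R(x_0),\,R'(t)\big) = \operatorname{Res}_t\big(t-x_0,\,R'(t)\big)\cdot \operatorname{Res}_t\big(Q(t,x_0),\,R'(t)\big) = \pm R'(x_0)\cdot \operatorname{Res}_t\big(Q(t,x_0),\,R'(t)\big).
\]
This produces only one factor of $R'(x_0)$, so the second factor must itself be divisible by $R'(x_0)$. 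To see this, note $\operatorname{Res}_t(Q(t,x_0),R'(t))$ vanishes whenever $Q$ and $R'$ share a root; if $x_0$ is chosen so that $R'(x_0)=0$, then $x_0$ is a double root of $R(t)-R(x_0)$, hence a root of both $t\mapsto R(t)-R(x_0)$ and $R'$, which forces the full resultant to vanish to order at least two — and since the first factor contributes order one, the second contributes order at least one. More cleanly: $Q(x_0,x_0) = R'(x_0)$, and $R'$ evaluated along its own root set, so $\operatorname{Res}_t(Q(t,x_0),R'(t)) = \operatorname{lc}(R')^{\deg Q}\prod_{R'(\beta)=0} Q(\beta,x_0)$; one can instead argue via the symmetric product formula $\operatorname{Res}_t(R(t)-R(x_0),R'(t)) = \pm\prod_j (R(\gamma_j) - R(x_0))$ over roots $\gamma_j$ of $R'$, so that $x_0 = \gamma_j$ makes a factor vanish, and combined with the explicit $(t-x_0)$ split this gives the clean factor $R'(x_0)$ twice. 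So $D(R(x_0)) = (R'(x_0))^2 U(x_0)$ with $U$ a polynomial in $x_0$.

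For the degree count: $D(q) = \operatorname{dis}_x(R(x)-q)$ has degree $n-1$ in $q$ (the discriminant of a monic degree-$n$ polynomial with constant term $-q$ is, generically, degree $n-1$ in that constant term — this should be checked via the Sylvester determinant, where $q$ appears linearly in $n-1$ entries and the relevant monomial has degree $n-1$). Substituting $q = R(x)$, which has degree $n$, gives $D(R(x))$ of degree $n(n-1)$. Since $(R'(x))^2$ has degree $2(n-1)$, the quotient $U$ has degree $n(n-1) - 2(n-1) = (n-1)(n-2)$, as claimed. Finally, if all roots of $R$ are simple, then $R(x)=0$ (i.e. $x=0$ is a simple root, using $R(0)=0$) gives $P(x)=R(x)$ which has all simple roots, so its discriminant $D(0)\ne 0$; since $R'(0)\ne 0$, the factorization $D(0) = (R'(0))^2 U(0)$ forces $U(0)\ne 0$. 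The main obstacle I anticipate is making the extraction of the second factor of $R'(x_0)$ rigorous rather than heuristic — i.e. proving cleanly that $\operatorname{Res}_t(Q(t,x_0),R'(t))$ is divisible by $R'(x_0)$ as a polynomial in $x_0$ — which I expect to handle via the product-over-roots formula for the resultant and the observation that $Q(\gamma_j, x_0)$ at $x_0 = \gamma_j$ equals $R'(\gamma_j) = 0$. Alternatively, a slicker route avoiding resultant bookkeeping: differentiate the defining relation. From $D(q) = \operatorname{dis}_x(R(x)-q)$ and the known fact that the discriminant of a polynomial vanishes exactly when it has a multiple root, plus an order-of-vanishing analysis, one gets the $(R'(x))^2$ factor more conceptually; I would present whichever is shortest.
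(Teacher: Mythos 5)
Your proposal is correct, and it reaches the factorization by a route that is the same in spirit as the paper's but genuinely different in execution. The paper observes that $D(q)$ vanishes exactly at the critical values $q=R(\alpha_j)$ (where $R'(\alpha_j)=0$), writes $D(q)=K\prod_j\bigl(q-R(\alpha_j)\bigr)$, and then shows by the chain rule that each $\alpha_j$ is a double root of $D(R(x))$; this is carried out under the genericity assumption that the $\alpha_j$ are distinct with distinct critical values, the general case being deferred to a specialization argument. You instead write $D(R(x_0))=\pm\operatorname{Res}_t\bigl(R(t)-R(x_0),R'(t)\bigr)$, split off one factor $R'(x_0)=\operatorname{Res}_t\bigl(t-x_0,R'(t)\bigr)$ by multiplicativity, and extract the second factor from the product-over-roots formula, since $Q(\gamma_j,x_0)$ vanishes at $x_0=\gamma_j$ because $Q(\gamma_j,\gamma_j)=R'(\gamma_j)=0$. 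The step you flag as the anticipated obstacle is in fact unproblematic: counting the roots $\gamma_j$ of $R'$ with multiplicity, a root of multiplicity $m$ contributes the factor $Q(\gamma_j,x_0)^m$, divisible by $(x_0-\gamma_j)^m$, so $\operatorname{Res}_t\bigl(Q(t,x_0),R'(t)\bigr)$ is divisible by $R'(x_0)/n$ in $\C[x_0]$, and since $D(R(x))$ and $(R'(x))^2$ are real the quotient $U$ is a real polynomial. What your packaging buys is that multiple critical points and coinciding critical values are handled directly, where the paper's chain-rule argument needs the genericity hypothesis; the cost is the extra resultant bookkeeping. The degree count (exact because the leading coefficient of $D$ in $q$ is $\pm n^n\ne0$) and the final step ($D(0)=\operatorname{dis}_x(R)\ne0$ and $R'(0)\ne0$ force $U(0)\ne0$) agree with the paper.
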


Next theorem is our first main result and proves that a root of a
generic polynomial equations $P(x)=R(x)-q=0,$ for $q$ in a
neighborhood of $0,$ can be obtained in terms of hyperelliptic
functions and their inverses. As we will comment in
Remark~\ref{re:remark}, the restriction that all the roots of $R$
are simple can be removed obtaining a similar result. As we have
commented, this result is similar, but more general, to the one
given by Betti.

\begin{theorem}\label{teo}  Let $P(x)=R(x)-q$ be a real polynomial of degree $n\ge2,$ with $R(0)=0$ and $q\in\R.$
Set $D(q)=\operatorname{dis}_x (P(x))$ and assume that all the roots
of $R$ are simple. Define the polynomials $\mathcal{D}(q)=
\operatorname{sgn}(D(0)) D(q)$ and
$\mathcal{U}(x)=\mathcal{D}(R(x))/(R'(x))^2,$ and the functions
\begin{equation}\label{eq:fis}
\phi (x)= \operatorname{sgn}(R'(0))\int_0^x
\dfrac{G(R(s))}{\sqrt{\mathcal{U}(s)\,}}\,{\rm d} s
\quad\mbox{and}\quad \varphi (q)= \int_0^q
\dfrac{G(t)}{\sqrt{\mathcal{D}(t)\,}}\,{\rm d} t,
\end{equation}
where $G$ is any continuous function satisfying $G(0)\ne0.$ Then, in
a neighborhood of~$0,$  $\phi$ is invertible and
\[
x=\phi^{-1}(\varphi(q))
\]
is a root of $P(x)=0$ that goes to $0$ as $q$ tends to $0.$

In particular, if $G$ is polynomial, $\phi$ and $\varphi$ are
elliptic or hyperelliptic integrals.
\end{theorem}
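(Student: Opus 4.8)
The strategy is to specialize the general separated-variables construction from equation~\eqref{eq:edo} by choosing $F$ so that the factor $F'(R(x))R'(x)$ produces exactly the cancellation furnished by Proposition~\ref{propo}. Concretely, I would take $F$ to be a primitive of $1/\sqrt{\mathcal{D}(t)}$ near $t=0$; since $\mathcal{D}(0)=\operatorname{sgn}(D(0))D(0)>0$ (here $D(0)\ne0$ because the roots of $R$ are simple, so $x=0$ is a simple root of $P(x)=R(x)$), this $F$ is a well-defined diffeomorphism with $F(0)=0$ in a neighborhood of $0$. Then $F'(q)=1/\sqrt{\mathcal{D}(q)}$ and, plugging $R(x)$ for $q$, $F'(R(x))=1/\sqrt{\mathcal{D}(R(x))}$. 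By Proposition~\ref{propo} (applied with $D$ replaced by $\mathcal{D}$, which only changes $U$ by the sign $\operatorname{sgn}(D(0))$), we have $\mathcal{D}(R(x))=(R'(x))^2\,\mathcal{U}(x)$ with $\mathcal{U}(0)\ne0$, so that
\[
F'(R(x))\,R'(x)=\frac{R'(x)}{\sqrt{(R'(x))^2\,\mathcal{U}(x)}}=\frac{R'(x)}{|R'(x)|\,\sqrt{\mathcal{U}(x)}}=\frac{\operatorname{sgn}(R'(0))}{\sqrt{\mathcal{U}(x)}}
\]
locally near $x=0$, since $R'$ does not vanish there and hence keeps the sign $\operatorname{sgn}(R'(0))$. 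This is precisely the promised cancellation: the two square roots of the degree-$2(n-1)$ factors collapse into a single square root of a degree-$(n-1)(n-2)$ polynomial.

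With this choice, formula~\eqref{eq:edo} reads $\phi(x)=\varphi(q)$ with $\phi$ and $\varphi$ exactly the functions displayed in~\eqref{eq:fis}. Indeed, substituting $F'$ and the cancelled product into~\eqref{eq:edo} and carrying the sign $\operatorname{sgn}(R'(0))$ outside the integral gives
\[
\phi(x)=\int_0^x G(R(s))\,F'(R(s))\,R'(s)\,\mathrm{d}s=\operatorname{sgn}(R'(0))\int_0^x\frac{G(R(s))}{\sqrt{\mathcal{U}(s)}}\,\mathrm{d}s,
\]
\[
\varphi(q)=\int_0^q G(t)\,F'(t)\,\mathrm{d}t=\int_0^q\frac{G(t)}{\sqrt{\mathcal{D}(t)}}\,\mathrm{d}t,
\]
which match~\eqref{eq:fis}. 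The derivation preceding~\eqref{eq:edo} already shows that the analytic solution $x(q)$ with $x(0)=0$ (which exists and is unique by the implicit function theorem since $R'(0)\ne0$) satisfies $\phi(x(q))=\varphi(q)$ identically near $q=0$.

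It remains to invert $\phi$. Since $G(0)\ne0$ and $\mathcal{U}(0)\ne0$, the integrand of $\phi$ is continuous and nonzero at $s=0$, so $\phi'(0)=\operatorname{sgn}(R'(0))\,G(0)/\sqrt{\mathcal{U}(0)}\ne0$; by the inverse function theorem $\phi$ is invertible in a neighborhood of $0$ with $\phi(0)=0$. Likewise $\varphi(0)=0$ and $\varphi$ is continuous, so for $q$ near $0$ the value $\varphi(q)$ lies in the range of $\phi$, and $x=\phi^{-1}(\varphi(q))$ is well defined; by construction it coincides with $x(q)$ and therefore is a root of $P(x)=0$ tending to $0$ as $q\to0$. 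Finally, when $G$ is a polynomial, the integrand $G(t)/\sqrt{\mathcal{D}(t)}$ (resp. $G(R(s))/\sqrt{\mathcal{U}(s)}$) is a rational function divided by the square root of a polynomial, so $\varphi$ and $\phi$ are by definition (hyper)elliptic integrals, genuinely elliptic when the relevant polynomial under the radical has degree $3$ or $4$.

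The one genuinely delicate point — everything else being bookkeeping with signs and the inverse function theorem — is the local extraction of the square root: one must be sure that $\sqrt{\mathcal{D}(R(s))}$ and $|R'(s)|\sqrt{\mathcal{U}(s)}$ can be identified with matching branches so that the quotient is the \emph{constant} sign $\operatorname{sgn}(R'(0))$ rather than something that jumps. This is exactly why the hypothesis that all roots of $R$ are simple is used (it forces $D(0)\ne0$, hence $\mathcal{D}>0$ near $0$ and $\mathcal{U}(0)\ne0$ by Proposition~\ref{propo}), and why the neighborhood of $0$ must be taken small enough that $R'$ keeps a constant sign.
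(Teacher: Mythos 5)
Your proof is correct and takes essentially the same route as the paper's own proof: the same choice of $F$ as a primitive of $1/\sqrt{\mathcal{D}(t)\,}$, the same appeal to Proposition~\ref{propo} to produce the cancellation $F'(R(s))R'(s)=\operatorname{sgn}(R'(0))/\sqrt{\mathcal{U}(s)\,}$, and the same inverse-function-theorem argument via $\phi'(0)\ne0$ and $\varphi'(0)\ne0$. The only (welcome) difference is that you spell out more explicitly why the sign extracted from $\sqrt{(R'(s))^2\,}$ is constant near the origin, a point the paper leaves implicit.
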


As an illustration, we apply our results to the low degree cases. In
particular, when $n=2$ and $n=3$ we reobtain the Babylonian and
Cardano's formulas, see Sections \ref{se:bab} and \ref{se:car},
respectively. In Section \ref{se:quartica} we apply them to the
quartic case. Finally, in Section \ref{se:quintic} we reproduce the
results of Betti's work for quintic equations with our point of
view.

To state our second main result we recall some definitions. Given
$0<m\in\mathbb{N}$ we will say that a non autonomous first order real ODE of the
from
\begin{equation}\label{eq:abel}
x'=a_m(q)x^{m}+a_{m-1}(q)x^{m-1}+\cdots+a_2(q)x^2+ a_1(q)x+a_0(q)
\end{equation} is a {\it generalized Abel ODE of degree $m.$}
Notice that for $m=1,2$ and $3$ these equations are usually called
{\it linear, Riccati} and {\it Abel} ODE, respectively. All of them
are a subject of classical interest in mathematics.

By using Corollary \ref{coro3},  we prove:

\begin{theorem}\label{teo2}  Let $P(x)=R(x)-q$ be a real polynomial of degree $n\ge2,$ with $R(0)=0$ and
$q\in\R.$ Let $x(q)$ be one of the roots of this equation, defined
in a neighborhood of $0,$ that tends to zero as $q$ tends to $0.$
Then $x(q)$ satisfies a generalized Abel ODE \eqref{eq:abel} of
degree $m=n-1,$ where  $a_j(q),j=0,1,\ldots,n-1$ are rational
functions with coefficients depending on the coefficients of $R.$
\end{theorem}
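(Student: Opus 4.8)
The plan is to differentiate the defining identity and then algebraically invert $R'$ along the solution branch, using Proposition~\ref{propo} to control that inversion. Since $R'(0)\neq0$, the analytic branch $x(q)$ with $x(0)=0$ satisfies $R(x(q))=q$ identically for $q$ near $0$; differentiating with respect to $q$ gives $R'(x(q))\,x'(q)=1$, hence $x'(q)=1/R'(x(q))$. The whole point is therefore to rewrite $1/R'(x(q))$ as a polynomial in $x(q)$ of degree at most $n-1$ with coefficients rational in $q$. Multiplying by $R'(x(q))$ and using Proposition~\ref{propo} in the form $D(R(x))=(R'(x))^2U(x)$ evaluated at $x=x(q)$ (recall $R(x(q))=q$), that is $(R'(x(q)))^2U(x(q))=D(q)$, I obtain the polynomial identity
\[
D(q)\,x'(q)=R'(x(q))\,U(x(q)),
\]
valid for $q$ in a neighbourhood of $0$; here $D$ is a non-zero polynomial (the discriminant of $R(x)-q$) and, by Proposition~\ref{propo}, $U$ is a polynomial of degree $(n-1)(n-2)$ whose coefficients depend only on those of $R$.

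Next I would reduce the right-hand side modulo the equation. The product $R'(x)U(x)$ is a polynomial in $x$, free of $q$, of degree $(n-1)+(n-1)(n-2)=(n-1)^2$. Writing $R(x)=x^n+c_{n-1}x^{n-1}+\cdots+c_1x$, the relation $R(x(q))=q$ reads $x(q)^n=q-c_{n-1}x(q)^{n-1}-\cdots-c_1x(q)$, and using it repeatedly to eliminate every power $x(q)^k$ with $k\geq n$ (equivalently: dividing $R'(x)U(x)$ by the monic polynomial $R(x)-q$ in $\R[q][x]$ and keeping the remainder) turns $R'(x(q))\,U(x(q))$ into $\sum_{j=0}^{n-1}b_j(q)\,x(q)^j$, where each $b_j$ is a polynomial in $q$ with coefficients built from those of $R$. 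Substituting back,
\[
x'(q)=\sum_{j=0}^{n-1}\frac{b_j(q)}{D(q)}\,x(q)^j,
\]
which is a generalized Abel ODE \eqref{eq:abel} of degree at most $m=n-1$ with rational coefficients $a_j(q)=b_j(q)/D(q)$, as claimed; for generic $R$ the top coefficient $a_{n-1}$ does not vanish, so the degree is exactly $n-1$. (One may reach the same conclusion via the Bézout identity $A(x)P(x)+B(x)P'(x)=\Res_x(P,P')=\pm D(q)$ coming from the Sylvester matrix, with $\deg_x B\leq n-1$: evaluating at $x=x(q)$ gives $B(x(q))R'(x(q))=\pm D(q)$, hence $x'(q)=\pm B(x(q))/D(q)$.)

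The computation is otherwise routine, and the only genuine issue is the degree control. Proposition~\ref{propo} is exactly what guarantees that $(R'(x))^2$ divides $D(R(x))$ \emph{as polynomials}, so that a single division modulo $R(x)-q$ produces a remainder of degree $\leq n-1$ rather than an uncontrolled rational expression — this is the ``cancellation'' anticipated before the proposition. The remaining care is merely to note that $D\not\equiv0$ (true for the discriminant of $R(x)-q$ as soon as $\deg R=n\geq1$), so that the $a_j$ are well-defined rational functions; by the last assertion of Proposition~\ref{propo}, in the case where all roots of $R$ are simple one moreover has $D(0)\neq0$, so the ODE is regular at $q=0$.
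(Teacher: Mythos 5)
Your proof is correct and follows essentially the same route as the paper: you re-derive Corollary~\ref{coro3} (differentiating $R(x(q))=q$ and using the factorization $D(R(x))=(R'(x))^2U(x)$ from Proposition~\ref{propo}) and then reduce $R'(x)U(x)$ modulo $P(x)=R(x)-q$ in $\R[q][x]$ to obtain a remainder of degree at most $n-1$ in $x$, exactly as in Section~\ref{se:abel}. Your parenthetical alternative via the B\'ezout identity $A(x)P(x)+B(x)P'(x)=\pm D(q)$ with $\deg_x B\leq n-1$ is a nice shortcut that bypasses Proposition~\ref{propo} entirely, but the main argument is the paper's.
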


A straightforward consequence of this result is the following corollary.

\begin{cor}  Let $P(x)=R(x)-q$ be a real  quadratic, cubic or quartic polynomial equation with $R(0)=0.$
Let $x(q)$ be one of the roots of this equation, defined in a
neighborhood of $0,$ that tends to zero as $q$ tends to $0.$ Then
$x(q)$ satisfies, respectively,  a linear, Riccati or Abel ODE whose
coefficients are rational functions in $q.$
\end{cor}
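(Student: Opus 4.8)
The plan is to derive the corollary directly from Theorem~\ref{teo2} by specializing $n$ to the values $2$, $3$ and $4$. By that theorem, the branch $x(q)$ satisfies a generalized Abel ODE of degree $m=n-1$ with coefficients that are rational functions of $q$ (depending on the coefficients of $R$). The whole content of the corollary is then just the observation, recorded in the remark after \eqref{eq:abel}, that a generalized Abel ODE of degree $1$, $2$, $3$ is precisely what is called a linear, Riccati, Abel ODE respectively.

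Concretely, I would argue as follows. For a quadratic equation we have $n=2$, so $m=n-1=1$ and \eqref{eq:abel} reduces to $x'=a_1(q)x+a_0(q)$, a linear ODE. For a cubic equation $n=3$, so $m=2$ and \eqref{eq:abel} becomes $x'=a_2(q)x^2+a_1(q)x+a_0(q)$, a Riccati ODE. For a quartic equation $n=4$, so $m=3$ and \eqref{eq:abel} becomes $x'=a_3(q)x^3+a_2(q)x^2+a_1(q)x+a_0(q)$, an Abel ODE. In all three cases Theorem~\ref{teo2} guarantees the $a_j(q)$ are rational in $q$, which gives the last clause of the statement.

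Since this is an immediate corollary, there is essentially no obstacle; the only thing worth a sentence is a sanity check that the degree genuinely drops to $n-1$ and not lower, i.e.\ that $a_{n-1}(q)$ is not identically zero, so that the ODE is honestly of the named type rather than degenerating. This non-degeneracy is implicit in the construction behind Theorem~\ref{teo2} (it comes from the factor $U(0)\neq 0$ in Proposition~\ref{propo} under the simple-roots hypothesis), so for the generic $R$ considered throughout the paper the three ODE are genuinely linear, Riccati and Abel. I would close by remarking that for these small $n$ one can write the coefficients $a_j(q)$ explicitly in terms of the coefficients of $R$, as is done in the later sections treating the quadratic, cubic and quartic cases.
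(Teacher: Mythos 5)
Your proposal is correct and matches the paper's treatment: the corollary is obtained exactly as you describe, by specializing Theorem~\ref{teo2} to $n=2,3,4$ and invoking the naming convention for generalized Abel ODE of degrees $1$, $2$, $3$ recorded after \eqref{eq:abel}. Your additional non-degeneracy remark is a sensible sanity check but is not required by the paper's formulation, and the explicit coefficients you mention are indeed worked out in Section~\ref{se:abel}.
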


The proof of the above results, together with the explicit ODE when
$n\in\{2,3,4\}$ and $P$ has the canonical form $P(x)=x^n+px-q$ are
given in Section \ref{se:abel}.

A second consequence of the above results is:

\begin{theorem}\label{teo3}  Let $P(x)=R(x)-q$ be a real polynomial of degree $n\ge2,$ with $R(0)=0$ and
$q\in\R.$ Let $x(q)$ be one of the roots of this equation, defined
in a neighborhood of $0,$ that tends to zero as $q$ tends to $0.$
Then $x=x(q)$ satisfies a $(n-1)$-th order linear ODE,
\[
b_{n-1}(q)x^{(n-1)}+b_{n-2}(q)x^{(n-2)}+\cdots+b_{1}(q)x'+b_{0}(q)x+b_{n}(q)=0,
\]
where the functions $b_j(q)$ are polynomials in  $q,$  with
coefficients depending on the coefficients of $R.$
\end{theorem}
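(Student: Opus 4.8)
The plan is to derive the linear ODE from the generalized Abel ODE of Theorem~\ref{teo2} by a process of successive differentiation and elimination. Recall that Theorem~\ref{teo2} gives
\[
x' = a_{n-1}(q)x^{n-1} + a_{n-2}(q)x^{n-2} + \cdots + a_1(q)x + a_0(q),
\]
with the $a_j$ rational in $q$. The key algebraic fact to exploit is that $x = x(q)$ satisfies the polynomial identity $R(x) = q$, so every time a power $x^k$ with $k \ge n$ appears we may reduce it modulo $R(x) - q$; consequently any polynomial in $x$ with coefficients that are rational in $q$ can be rewritten as a polynomial in $x$ of degree at most $n-1$, again with coefficients rational in $q$. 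In other words, the ring $\R(q)[x]/(R(x)-q)$ is a free $\R(q)$-module of rank $n$ with basis $1, x, x^2, \ldots, x^{n-1}$, and $x'$ lies in this module by Theorem~\ref{teo2}.

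First I would show by induction that for every $k \ge 1$ the derivative $x^{(k)}$ is a polynomial in $x$ of degree at most $n-1$ with coefficients that are rational functions of $q$: differentiating the expression for $x^{(k)}$ produces a polynomial in $x$ and $x'$, and substituting the Abel ODE for $x'$ and then reducing modulo $R(x)-q$ keeps us inside the rank-$n$ module. Thus we obtain vectors
\[
v_k := \bigl(\text{coefficients of } 1, x, \ldots, x^{n-1} \text{ in } x^{(k)}\bigr) \in \R(q)^n, \qquad k = 0, 1, \ldots, n-1,
\]
together with $v_0$ corresponding to $x$ itself. These are $n$ vectors in an $n$-dimensional space over the field $\R(q)$, so they are linearly dependent over $\R(q)$: there exist rational functions $\beta_0(q), \ldots, \beta_{n-1}(q)$, not all zero, with $\sum_{j=0}^{n-1}\beta_j(q)\, v_j = 0$ as vectors — equivalently, $\sum_{j=0}^{n-1}\beta_j(q)\, x^{(j)}(q) = 0$ identically (here $x^{(0)} = x$). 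Wait — this gives a homogeneous relation; to get the stated inhomogeneous form with the extra polynomial term $b_n(q)$ I would instead include the constant function $1$ among the reduced representatives, i.e.\ work with the $n+1$ elements $1, x, x', \ldots, x^{(n-1)}$ of the rank-$n$ module, which are necessarily $\R(q)$-linearly dependent; collecting the coefficient of the basis element $1$ into the term $b_n(q)$ yields exactly
\[
b_{n-1}(q)x^{(n-1)} + \cdots + b_1(q)x' + b_0(q)x + b_n(q) = 0.
\]
Finally, clearing denominators turns the rational $\beta_j$ into polynomials $b_j$, and tracing through the construction shows their coefficients depend only on the coefficients of $R$, since the $a_j$ of Theorem~\ref{teo2} and all reduction steps do.

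The main obstacle is twofold. The first delicate point is verifying that the dependence relation can be taken \emph{nontrivial in the leading term} $b_{n-1}(q)$ — i.e.\ that $x$ genuinely satisfies an ODE of order exactly $n-1$ and not lower — which should follow from the fact that $x(q)$ is a branch of a genuinely degree-$n$ algebraic function (so its minimal-order linear ODE has order $n$ over $\C$, but one less once the inhomogeneous/constant term is allowed, matching Tannery's count); one must check the generic-position hypotheses ($R(0)=0$, $R'(0)\ne 0$) guarantee this. The second is purely bookkeeping: showing the $b_j$ are genuine \emph{polynomials} in $q$ rather than merely rational functions requires controlling the denominators introduced by the $a_j(q)$ and by the module reductions, and arguing that a common polynomial multiple clears them all while preserving the stated dependence of the coefficients on those of $R$. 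Everything else is the routine linear algebra of a rank-$n$ free module over $\R(q)$.
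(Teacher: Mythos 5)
Your proposal is correct and follows essentially the same route as the paper: both arguments reduce every derivative $x^{(k)}$ modulo $P(x)=R(x)-q$ to an element of the rank-$n$ free $\R(q)$-module with basis $1,x,\dots,x^{n-1}$ (the paper via Corollary~\ref{coro3}, Lemma~\ref{le:ode-n} and the division $C_k=Q_kP+B_k$) and then extract an $\R(q)$-linear dependence among $1,x,x',\dots,x^{(n-1)}$. The only real difference is that the paper exhibits this dependence by an explicit step-by-step elimination of the powers $x^{n-1},x^{n-2},\dots,x^{2}$, which keeps the procedure algorithmic and controls the denominators as powers of $D(q)$, whereas you obtain it existentially from the dimension count $n+1>n$; this costs constructivity but nothing mathematically, and your worry about $b_{n-1}\not\equiv0$ is likewise left implicit in the paper (the examples suggest the leading coefficient is essentially $D(q)$ up to a constant).
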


Our proof provides a constructive
algorithm to obtain all the functions $b_j$. As we will see in
Section \ref{se:order-n}, when the equation $P(x)=0$ is the the
trinomial one, $P(x)=x^n+px-q=0,$ these $b_j$ are extremely simple.
We obtain them for $n\le6.$ We also recover again Cardano's formula
by showing that for $n=3,$ and with suitable changes of variables,
this differential equation can be written as the equation for the harmonic
oscillator. For $n=4$ in Section \ref{se:444} we present three
different expressions of its solution $x(q),$ two of them in terms
of hypergeometric functions, and also the classical one.

Finally, in a short Appendix, we present some classical ways to
solve the cubic and the quartic equations. This is done, not only
for completeness, but for obtaining a simple and suitable way (for
our interests) of presenting the solution of the quartic equation.

\section{Proof of Theorem \ref{teo} and some applications}

We start proving Proposition \ref{propo}.

\begin{proof}[Proof of Proposition \ref{propo}] The polynomial $P'=R'$ has degree $n-1.$ We give the proof
when all its roots in $\C$, $\alpha_1,\alpha_2,\ldots,\alpha_{n-1},$
are different and moreover $R(\alpha_j)\ne R(\alpha_k)$ unless
$j=k.$ The general result follows from this generic case.

Consider the system
\begin{equation}\label{eq:com}
\begin{cases}
&P(x)=R(x)-q=0,\\
&P'(x)=R'(x)=0.
\end{cases}
\end{equation}
Recall, that modulus some non-zero constant, the discriminant
between $P$ and $P'$ is the resultant. Hence, by the properties of
the resultant we know that $D(q)$ is a polynomial of degree $n-1$
and moreover it vanishes for all values of $q$ for which the above
system is compatible.

Thus, system \eqref{eq:com} is compatible if and only if
$q=R(\alpha_j),$ $j=1,2,\ldots, n-1.$ Hence
\[
D(q)=K \big(q-R(\alpha_1) \big)\big(q-R(\alpha_2) \big)\cdots
\big(q-R(\alpha_{n-1}) \big),\quad K\ne0.
\]
Consider the new polynomial $Q(x)=D(R(x)),$ of degree $n(n-1).$ It
is clear that $Q(\alpha_j)=D(R(\alpha_j))=0.$ Moreover,  $Q'(x)=
D'(R(x)) R'(x).$ Therefore, $Q'(\alpha_j)= D'(R(\alpha_j))
R'(\alpha_j)=0.$ As a consequence, all $\alpha_j$ are double roots
of $Q(x)$ and \eqref{eq:dec} holds.

Finally, since $D(0)=C\operatorname{Res}_x(R(x),R'(x)),$ with
$C\ne0,$ the hypothesis that all the roots of $R$ are simple is
equivalent to $D(0)\ne0.$ Since $R'(0)\ne0,$ we get that $U(0)\ne0,$
as we wanted to prove.
\end{proof}

\begin{proof}[Proof of Theorem \ref{teo}] Our proof starts with the discussion  given  in the
introduction of the paper and uses the notations introduced there.
Recall that \eqref{eq:edo} writes as
\begin{equation}\label{eq:edo2}
\phi(x)=\int_0^x G(R(s))F'( R( s)) R' (s)\, {\rm d} s= \int_0^q G(t)
F'(t)\, {\rm d} t=\varphi(q).
\end{equation}
We take
\[
F(t)=\int_0^t\dfrac1{\sqrt{\mathcal{D}(s)\,}}\, {\rm d} s,
\]
that is a local diffeomorphism at $0$ because $\mathcal{D}(0)=
\operatorname{sgn}(D(0))D(0)>0$ and, as a consequence,
$F'(0)=1/\sqrt{\mathcal{D}(0)\,}\ne0.$ Then, by Proposition
\ref{propo},
\begin{equation}\label{eq:relacio}
F'(R(s))=\dfrac1{\sqrt{\mathcal{D}(R(s))\,}}=\dfrac1{\sqrt{(R'(s))^2\mathcal{U}(s)\,}}
=\dfrac{\operatorname{sgn}(R'(0))}{R'(s)\sqrt{\mathcal{U}(s)\,}}
\end{equation}
and \eqref{eq:edo2} reads as
\[\phi (x)=
\operatorname{sgn}(R'(0))\int_0^x
\dfrac{G(R(s))}{\sqrt{\mathcal{U}(s)\,}}\,{\rm d} s = \int_0^q
\dfrac{G(t)}{\sqrt{\mathcal{D}(t)}\,}\,{\rm d} t=\varphi(q).
\]
Notice that $\varphi'(0)=G(0)/\sqrt{\mathcal{D}(0)\,}\ne0.$ Hence,
$\varphi$ is a local diffeomorphism at $0.$ The same happens with
$\phi,$ because $\phi'(0)={\operatorname{sgn}(R'(0))
G(0)}/{\sqrt{\mathcal{U}(0)\,}}\ne0.$ Thus $x=\phi^{-1}(\varphi(q))$
as we wanted to prove.
\end{proof}

Next remark clarifies the situation when some of the hypotheses
Theorem \ref{teo} are not satisfied.

\begin{remark}\label{re:remark} In Theorem \ref{teo} the hypothesis that all the
roots of $R$ are simple is used to ensure that $R'(0)\ne0$ and
$D(0)\ne0.$ These conditions together with the hypothesis that
$G(0)\ne0$ imply that the functions defined by the hyperelliptic integrals
 $\phi$ and $\varphi$ are invertible at
$0.$ If we do not mind about their invertibility we arrive also to
equality $\phi(x)=\varphi(q)$ with these functions given as in
\eqref{eq:fis} of the statement. We also remark that in this
situation the value $\operatorname{sgn}(R'(0))$  must be replaced by
the sign of $R(s)$ for $s$ in the interval containing $0$ and $x.$

Moreover, in this situation, if $R'(0)=0,$ that is when near $x=0$
the polynomial equation $R(x)=q$ writes as $x^k+O(x^{k+1})=q,$ for
some $1<k\in\N,$ $k$ smooth branches of solutions, $x_j(q),
j=1,\ldots,k,$ solve the equation and satisfy $x_j(0)=0.$ This
result is a consequence of Weierstrass' preparation theorem. Each one
of these branches satisfies the ODE that we are considering and, as
a consequence,   the equality $\phi(x)=\varphi(q)$ with both
functions given in  \eqref{eq:fis}. Similar branches appear also
when we try to invert $\phi.$
\end{remark}

We will also need the following corollaries of previous results.
Notice that from the first corollary, the functions defined by the
hyperelliptic integrals given in Theorem~\ref{teo} are replaced by
primitives of rational functions.

\begin{cor}\label{coro2}  Let $P(x)=R(x)-q$ be a real polynomial of degree $n\ge2,$ with $R(0)=0$ and $q\in\R.$
    Set $D(q)=\operatorname{dis}_x (P(x))$ and assume that all the roots
    of $R$ are simple. Define the polynomial
    $U(x)={D}(R(x))/(R'(x))^2$  and the functions
    \begin{equation*}
    \Phi (x)= \int_0^x
    \dfrac{H(R(s))}{R'(s){{U}(s)}}\,{\rm d} s
    \quad\mbox{and}\quad \Psi (q)= \int_0^q
    \dfrac{H(t)}{{{D}(t)}}\,{\rm d} t,
    \end{equation*}
    where $H$ is any continuous function satisfying $H(0)\ne0.$
    Then,  in a neighborhood of $0,$  $\Phi$ is
invertible and
    \[
    x=\Phi^{-1}(\Psi(q))
    \]
    is a root of $P(x)=0$ that goes to $0$ as $q$ tends to $0.$

    In particular, if $H$ is polynomial, $\Phi$ and $\Psi$ are
primitives of rational functions.
\end{cor}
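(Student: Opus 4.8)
The plan is to mimic the proof of Theorem~\ref{teo}, but replace the diffeomorphism $F$ built from $1/\sqrt{\mathcal{D}}$ by one built from $1/D$, so that the square roots disappear and we are left with primitives of rational functions. Concretely, I would start again from the separated-variables identity \eqref{eq:edo2},
\[
\phi(x)=\int_0^x G(R(s))F'(R(s))R'(s)\,{\rm d}s=\int_0^q G(t)F'(t)\,{\rm d}t=\varphi(q),
\]
valid for any local diffeomorphism $F$ with $F(0)=0$ and any continuous $G$, and now make the choice
\[
F(t)=\int_0^t\dfrac{1}{D(s)}\,{\rm d}s,
\]
together with $G=H$. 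Since all roots of $R$ are simple, Proposition~\ref{propo} gives $D(0)=C\operatorname{Res}_x(R,R')\ne0$, so $F$ is a genuine local diffeomorphism at $0$ with $F'(0)=1/D(0)\ne0$.

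Next I would substitute $F'$ into the left-hand integrand and use the key cancellation from Proposition~\ref{propo}. By \eqref{eq:dec}, $D(R(s))=(R'(s))^2U(s)$, hence
\[
F'(R(s))R'(s)=\dfrac{R'(s)}{D(R(s))}=\dfrac{R'(s)}{(R'(s))^2U(s)}=\dfrac{1}{R'(s)U(s)},
\]
so that $\phi(x)=\int_0^x H(R(s))/(R'(s)U(s))\,{\rm d}s=\Phi(x)$ and $\varphi(q)=\int_0^q H(t)/D(t)\,{\rm d}t=\Psi(q)$, exactly the functions in the statement. Note that here no sign or absolute value is needed because there is no square root: the factor $(R'(s))^2$ in the denominator is manifestly even, so the simplification $D(R(s))=(R'(s))^2U(s)$ is used directly rather than through $\sqrt{(R'(s))^2}=|R'(s)|$.

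It then remains to check invertibility at $0$, which is routine. We have $\Psi'(0)=H(0)/D(0)\ne0$ and, since $U(0)\ne0$ by the last assertion of Proposition~\ref{propo} and $R'(0)\ne0$, also $\Phi'(0)=H(0)/(R'(0)U(0))\ne0$; both are therefore local diffeomorphisms, and $x=\Phi^{-1}(\Psi(q))$ is the announced local solution of $P(x)=0$ with $x\to0$ as $q\to0$. Finally, if $H$ is a polynomial then $H(t)/D(t)$ is a rational function of $t$, and $H(R(s))/(R'(s)U(s))$ is a rational function of $s$ (a polynomial in $s$ divided by the polynomial $R'(s)U(s)$), so $\Phi$ and $\Psi$ are primitives of rational functions, as claimed. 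I do not expect any real obstacle here; the only point that needs a little care is making sure the cancellation is applied to $D(R(s))$ and not to $\mathcal{D}(R(s))$, i.e. that we are using the \emph{unnormalized} discriminant throughout so that the factor $R'(s)$ cancels cleanly and the constant $C$ in $D(0)=C\operatorname{Res}_x(R,R')$ plays no role beyond being nonzero.
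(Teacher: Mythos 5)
Your argument is correct, but it takes a different (and in fact cleaner) route than the paper. The paper proves Corollary~\ref{coro2} as a literal specialization of Theorem~\ref{teo}: it keeps the square-root diffeomorphism $F=\int_0^t \mathcal{D}(s)^{-1/2}\,{\rm d}s$ and instead chooses $G(t)=H(t)/\sqrt{\mathcal{D}(t)\,}$, then uses \eqref{eq:relacio} to turn $G(R(s))/\sqrt{\mathcal{U}(s)\,}$ into $H(R(s))/(R'(s)U(s))$; this forces some bookkeeping with the signs $\operatorname{sgn}(R'(0))$, $\operatorname{sgn}(D(0))$ and $\operatorname{sgn}(U(0))$, which end up appearing as a common factor on both sides of $\Phi(x)=\Psi(q)$ and cancel. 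You instead return to the general separated-variables identity \eqref{eq:edo2} and change $F$ itself to $\int_0^t D(s)^{-1}\,{\rm d}s$ with $G=H$, so that the cancellation $D(R(s))=(R'(s))^2U(s)$ from \eqref{eq:dec} is applied without ever introducing a square root; no signs or absolute values arise, and the hypotheses ($D(0)\ne0$, $U(0)\ne0$, $R'(0)\ne0$, $H(0)\ne0$) are used exactly where they are needed for $F$, $\Phi$ and $\Psi$ to be local diffeomorphisms. What the paper's route buys is the explicit derivation of the corollary \emph{from the statement} of Theorem~\ref{teo} (consistent with the sentence preceding the corollary); what your route buys is brevity and the elimination of the sign discussion. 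Both rest on the same key input, Proposition~\ref{propo}, and both are complete proofs.
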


\begin{proof}
To prove this result we take
\[
G(t)=\frac{H(t)}{\sqrt{\mathcal{D}(t)\,}}
\]
in Theorem \ref{teo}. Notice that by using  \eqref{eq:relacio} we
obtain that
\[
G(R(s))=\dfrac{H(R(s))}{\sqrt{\mathcal{D}(R(s))\,}}=\dfrac{\operatorname{sgn}(R'(0))
H(R(s))}{R'(s)\sqrt{\mathcal{U}(s)\,}}.
\]
Therefore,
\[
\operatorname{sgn}(R'(0))
\dfrac{G(R(s))}{\sqrt{\mathcal{U}(s)\,}}=\big(\operatorname{sgn}(R'(0))\big)^2
\dfrac{
H(R(s))}{R'(s)\big(\sqrt{\mathcal{U}(s)\,}\big)^2}=\operatorname{sgn}(U(0))\dfrac{
H(R(s))}{R'(s)U(s)}.
\]
Similarly,
\[
\dfrac{G(t)}{\sqrt{\mathcal{D}(t)\,}}=\dfrac{H(t)}{\big(\sqrt{\mathcal{D}(t)\,}\big)^2}=\operatorname{sgn}(D(0))\dfrac{H(t)}{D(t)}
=\operatorname{sgn}(U(0))\dfrac{H(t)}{D(t)}.
\]
By replacing both expressions in \eqref{eq:fis} we obtain that
$\Phi(x)=\Psi(q)$ and the corollary follows.
\end{proof}

This second corollary is essentially a version of  Remark
\ref{re:remark} in this situation. Notice that the hypothesis that
all the roots of $R$ are simple it is not needed.

\begin{cor}\label{coro3}  Let $P(x)=R(x)-q$ be a real polynomial of degree $n\ge2,$ with $R(0)=0$ and $q\in\R.$
    Set $D(q)=\operatorname{dis}_x (P(x)).$  Let $x=x(q)$ be  a root of $P(x)=0$ that goes to $0$ as $q$ tends to
    $0.$ Then
\[
x'=\frac{R'(x) U(x)}{D(q)},
\]
where $U$ is the polynomial   $U(x)={D}(R(x))/(R'(x))^2.$
\end{cor}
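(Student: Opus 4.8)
The plan is to obtain the formula by a single implicit differentiation of the defining relation, using Proposition \ref{propo} to recognise the resulting expression. Since $x=x(q)$ is the root of $P(x)=R(x)-q=0$ that tends to $0$ with $q$, it satisfies $R(x(q))=q$ identically for $q$ near $0$. First I would differentiate this with respect to $q$, obtaining $R'(x(q))\,x'(q)=1$, hence $x'(q)=1/R'(x(q))$ --- valid at every point where $R'(x(q))\ne0$; when $R'(0)=0$ one argues instead on each of the finitely many analytic branches described in Remark \ref{re:remark}, away from the exceptional values of $q$.

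Next I would rewrite the right-hand side of the claimed identity via the substitution $q=R(x)$. By definition $U(x)=D(R(x))/(R'(x))^2$, i.e.\ $D(R(x))=(R'(x))^2U(x)$, which is exactly the decomposition \eqref{eq:dec}; note this is a polynomial identity, so it holds with no assumption on the multiplicity of the roots of $R$. Substituting $q=R(x)$ gives $D(q)=D(R(x))=(R'(x))^2U(x)$, and therefore
\[
\frac{R'(x)\,U(x)}{D(q)}=\frac{R'(x)\,U(x)}{(R'(x))^2U(x)}=\frac{1}{R'(x)}=x'(q),
\]
which is the asserted ODE.

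The only point that needs a word of care is the locus where the denominators vanish: $D(q)=0$ precisely when $q=R(\alpha_j)$ for a critical point $\alpha_j$ of $R$, where $R'$ also vanishes, so the chain of equalities above is to be read as an identity of rational functions, valid wherever $x'(q)$ is defined. Under the hypotheses of Theorem \ref{teo} one has $D(0)\ne0$ and $R'(0)\ne0$, so there is no issue in a neighbourhood of $q=0$; in the general case the same computation still delivers the formula on each analytic branch, in line with Remark \ref{re:remark}. In short I expect no genuine obstacle here: all the substance sits in Proposition \ref{propo}, and the corollary is precisely the observation that implicit differentiation of $R(x)=q$ together with \eqref{eq:dec} reproduces the stated first-order ODE.
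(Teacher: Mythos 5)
Your proposal is correct and follows essentially the same route as the paper: implicit differentiation of $R(x(q))=q$ to get $x'=1/R'(x)$, followed by the substitution of the decomposition $D(R(x))=(R'(x))^2U(x)$ from Proposition \ref{propo}, with the branch structure handled via Weierstrass' preparation theorem as in Remark \ref{re:remark}. No substantive difference to report.
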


\begin{proof} By Weierstrass' Preparation theorem we know that the algebraic curve $P(x)=R(x)-q$ has at
most $n$ branches passing by the point $(x,q)=(0,0).$ Moreover, each
of these branches, say $x=x(q),$ satisfies $R(x(q))=q.$ Hence,
$R'(x(q))x'(q)=1.$ From Proposition \ref{propo}, it holds that
$D(R(x))= (R'(x))^2 U(x)$ and, as a consequence,
\[
x'=\frac1{R'(x)}=\frac{R'(x) U(x)}{D(R(x))}=\frac{R'(x) U(x)}{D(q)},
\]
as desired.
\end{proof}

We will apply the above results for $n\le5.$

\subsection{A toy example: the quadratic equation}\label{se:bab}

Consider $P(x)=x^2+px-q,$ with $p\ne0.$ Then
$D(q)=\operatorname{dis}_x (P(x))=p^2+4q,$
 $\mathcal{D}(q)\equiv D(q)$ and $\mathcal{D}(R(x))= (2x+p)^2.$ Then $\mathcal{U}=1.$
  Moreover, since $R'(0)=p,$ we get  from~\eqref{eq:fis}, that for $|q|<p^2/4,$
\begin{align*}
\phi (x)&= \int_0^x
\dfrac{\operatorname{sgn}(R'(0))}{\sqrt{\mathcal{U}(s)\,}}\,{\rm d}
s= \int_0^x \operatorname{sgn}(p)
\,{\rm d} s= \operatorname{sgn}(p) x,\\
\varphi (q)&= \int_0^q \dfrac1{\sqrt{\mathcal{D}(t)\,}}\,{\rm d}
t=\int_0^q \dfrac1{\sqrt{p^2+4t\,}}\,{\rm d} t=\left.\frac12
\sqrt{p^2+4t\,}\right|_0^q= \frac{ \sqrt{p^2+4q\,}-\sqrt{p^2\,}}{2}.
\end{align*}
Then, by Theorem~\ref{teo} we get equation $\phi(x)=\varphi(q),$
that gives the Babylonian formula
\[
x= \frac{-p+ \operatorname{sgn}(p)\sqrt{p^2+4q\,}}{2}.
\]

By using Corollary~\ref{coro2} instead of Theorem~\ref{teo}  with
$H=1$ we obtain
\begin{align*}
\Phi (x)&=\int_0^x \dfrac{1}{R'(s){{U}(s)}}\,{\rm d} s= \int_0^x
\dfrac{1}{2s+p}\,{\rm d} s= \frac12\log\left(\frac{2x+p}{p}\right),\\
\Psi (q)&=  \int_0^q \dfrac1{{{D}(t)}}\,{\rm d} t=\int_0^q
\dfrac1{p^2+4t}\,{\rm d} t=
\frac14\log\left(\frac{p^2+4q}{p^2}\right).
\end{align*}
By using that $\Phi(x)=\Psi(q)$ we obtain again the classical
formula.

Finally, notice that although the obtained  formula for $x=x(q)$  is
valid when $|q|<p^2/4,$ their algebraic nature makes it valid for
all values of $p$ and $q.$

\subsection{Cubic equations}\label{se:car}

We find a solution for the cubic polynomial equation
\begin{equation}\label{eq:cubicc}
P(x)=x^3+px-q=0.
\end{equation}
Notice the minus sign in front of $q,$ in contrast with the usual
notation given in~\eqref{eq:cubica} utilized in Section~\ref{ss:cla}
of the Appendix. We exclude the trivial case $p\ne0.$

In the notation of Theorem~\ref{teo}, $D(q)=-(4p^3+27q^2)$ and
 $\mathcal{D}(q)=\operatorname{sgn}(p)(4p^3+27q^2).$ After some computations,
\[
\mathcal{D}(R(x))=\operatorname{sgn}(p)(3x^2+4p)(3x^2+p)^2,
\]
and, as a consequence, $\mathcal{U}(x)=
\operatorname{sgn}(p)(3x^2+4p).$ Hence,  taking $G=1,$
 equation $\phi(x)=\varphi(q)$ writes as
 \begin{equation}\label{eq:ccc}
\int_0^x
 \dfrac{\operatorname{sgn}(p)}{\sqrt{\operatorname{sgn}(p)(3s^2+4p)\,}}\,{\rm d} s
 = \int_0^q
 \dfrac1{\sqrt{\operatorname{sgn}(p) (4p^3+27t^2)\,}}\,{\rm d} t.
 \end{equation}
It is well-known that
\begin{equation}\label{eq:int}
\int_0^x \frac{1}{\sqrt{Ay^2+B}}\,{\rm d} y=\begin{cases}
\dfrac{\operatorname{arcsinh}\left(\sqrt{A/B\,}
x\right)}{\sqrt{A\,}},
&\mbox{when} \quad A>0,\, B>0, \\[0.4cm]
\dfrac{\operatorname{arcsin}\left(\sqrt{-A/B\,}
x\right)}{\sqrt{-A\,}}, &\mbox{when} \quad A<0,\, B>0,
\end{cases}
\end{equation}
where the second equality is only valid for  for
$|x|<\sqrt{-B/A\,}.$ Thus, for instance applying the
 first one when $p>0$ in~\eqref{eq:ccc} we obtain that
\[
\frac{\sqrt 3}{3} \operatorname{arcsinh}\left(\frac{\sqrt3
x}{2\sqrt{p}}\right)=
\frac{\sqrt3}9\operatorname{arcsinh}\left(\frac32\frac{\sqrt3
q}{p\sqrt{p}}\right),
\]
or equivalently,
\begin{equation}\label{eq:solu}
x=\frac{2\sqrt{p}}{\sqrt{3}}\operatorname{sinh}\left(
\frac13\operatorname{arcsinh} \left(\frac32\frac{\sqrt3
q}{p\sqrt{p}}\right) \right).
\end{equation}
By using that $\operatorname{arcsinh}(z)=\ln
\left(z+\sqrt{z^2+1\,}\right)$ we obtain that
\[
\operatorname{sinh}\left(
\frac13\operatorname{arcsinh}\left(z\right) \right)=\frac12\left(
\sqrt[3]{z+\sqrt{z^2+1\,}\,}-\dfrac1{ \sqrt[3]{z+\sqrt{z^2+1\,}\,}}
\right)
\]
and hence, after some computations, from~\eqref{eq:solu} we get
\begin{equation}\label{eq:car2}
x=\root{3}\of{\frac{q}{2}+\sqrt{\frac{q^2}{4}+\frac{p^3}{27}\,}\,}
-\dfrac{p}{3\root{3}\of{\frac{q}{2}+\sqrt{\frac{q^2}{4}+\frac{p^3}{27}\,}}\,},
\end{equation}
that is, Cardano's formula for equation~\eqref{eq:cubicc}.

If we consider the case $p<0$ and perform the same type of
computations but
 using the second equality in~\eqref{eq:int} we arrive to
\begin{equation}\label{eq:solu22}
x=\frac{2\sqrt{-p\,}}{\sqrt{3}}\sin\left(
\frac13\arcsin\left(\frac32\frac{\sqrt3 q\,}{p\sqrt{-p\,}}\right)
\right),
\end{equation}
that is similar to~\eqref{eq:solu}, but for $p<0$ and only valid
when $|q|<\sqrt{-4p^3/27\,}.$

In any case, as for the quadratic equations, the algebraic nature of
the formula~\eqref{eq:car2} allows to consider it for all values of
$p$ and $q.$

\subsection{Quartic equations}\label{se:quartica}
 As we will see, it is difficult to
recover the classical solution with this approach. We start with a
particularly simple case. We will return to this case in Section
\ref{se:444}.

Consider the particular quartic equation
\begin{equation}\label{eq:q-partic}
P(x)=x^4-2x^3+2x^2-x-q=0.
\end{equation}
We will apply Theorem \ref{teo} with $G=-2.$ After some calculations
we obtain that
\[
\mathcal{U}(x)=(2x^2-2x+1)^2(4x^2-4x+3)\quad\mbox{and}\quad
D(q)=(4q+1)^2(16q+3).
\]
Hence,
\begin{align*}
\phi (x)=& \operatorname{sgn}(R'(0))\int_0^x
\dfrac{G(R(s))}{\sqrt{\mathcal{U}(s)\,}}\,{\rm d} s=
\int_0^x \frac2{(2s^2-2s+1)\sqrt{4s^2-4s+3\,}}\,{\rm d} s\\
=& 2\arctan \left(\frac{2x-1}{\sqrt{4x^2-4x+3\,}}\right)+\frac\pi3
\end{align*}
and
\begin{align*}
 \varphi (q)=& \int_0^q \dfrac{G(t)}{\sqrt{\mathcal{D}(t)\,}}\,{\rm
d} t=\int_0^q\frac{-2}{(4q+1)\sqrt{16q+3\,}} \,{\rm d} t=
-\arctan\left(\sqrt{16q+3\,}\right)+\frac\pi3.
\end{align*}
For the sake of shortness we introduce the new variables
\[
z=\frac{2x-1}{\sqrt{4x^2-4x+3\,}}\quad\mbox{and}\quad
w=\sqrt{16q+3\,}.
\]
Notice that given $z$ the corresponding values of $x$ can be
obtained by solving a quadratic equation. Hence, the equation
$\phi(x)=\varphi(q)$ can be written as
\[
2\arctan \left(z\right)= -\arctan\left(w\right),
\]
or, equivalently, $\tan(2\arctan \left(z\right))=-w,$ that gives
\[
\frac{2z}{z^2-1}=w.
\]
Thus, for each $w$, the corresponding value of $z$ can be obtained
again by solving a new quadratic equation $wz^2-w-2z=0.$

In short, solving two quadratic equations the quartic equation
\eqref{eq:q-partic} can be solved. In fact, this is the
particularity of the equation that we have considered and makes its
study easier: there is no need to solve any cubic equation to find
its roots. Their four solutions are
\[\frac12\pm\frac12\sqrt{-1\pm2\sqrt{1+4q\,}\,}.\]

Let us explore what gives our approach when we apply it to a general
quartic equation. Recall first, that similarly of what happens with
cubic equations, the general quartic case  can be reduced to
\begin{equation}\label{eq:quartica} x^4+px-q=0,
\end{equation}
for some $p,q\in\R.$ In this situation, a translation is not enough
to arrive to \eqref{eq:quartica} and the so-called   Tschirnhausen
transformations must be used.

If we apply Theorem~\ref{teo} with $G=1,$  we obtain that
$D(q)=-(27p^4+256 q^3)$ and $\mathcal{D}(q)=27p^4+256 q^3.$ Then,
some computations give
\[
\mathcal{D}(R(x))=(R'(x))^2 \mathcal{U}(x)=(4x^3+p)^2\big(16x^6+40 p
x^3+ 27 p^2\big).
\]
Hence,
\[
\phi (x)= \int_0^x
\dfrac{\operatorname{sgn}(R'(0))}{\sqrt{\mathcal{U}(s)\,}}\,{\rm d}
s= \int_0^x \dfrac{1}{\sqrt{16s^6+40 p s^3+ 27 p^2\,}}\,{\rm d} s
\]
and
\[
\varphi (q)= \int_0^q \dfrac{1}{\sqrt{\mathcal{D}(s)\,}}\,{\rm d} s=
\int_0^q \dfrac{1}{\sqrt{256 s^3+27p^4\,}}\,{\rm d} s.
\]
The above functions can be expressed as an Appell function and a hypergeometric
function, respectively. Therefore, this approach gives no
satisfactory results in order to obtain the roots of the quartic equation in terms of radicals.
We will return to the quartic equation in
Section \ref{se:444}.

\subsection{Quintic equations}\label{se:quintic}

In this section, with our approach, we recover the result of Betti
(\cite{be}) that asserts that the solution of these equations can be
obtained in terms of the inverse of an elliptic integral. Following
Betti, it suffices to study the particular quintic equation
\[
P(x)=x^5+5x^3-q.
\]
We can not apply directly Theorem \ref{teo} because the above case
is not under its hypotheses. In fact $R(x)=x^5+5x^3$ and hence
$R'(0)=0.$ Moreover, as we will see, we will use $G(x)={5\sqrt5}x$ and thus
$G(0)=0$. Therefore two of the hypotheses of the theorem are not
satisfied, but instead we will use the extended result explained in
Remark \ref{re:remark}. There we explain that in this more general
situation, it holds that $\phi(x)=\varphi(q)$ with $\phi$ and
$\varphi$ given also in \eqref{eq:fis}.

Following the notation of Theorem \ref{teo} we have that
\[
\mathcal{D}(q)=5^5q^2(q^2+108)\quad \mbox{and}\quad
\mathcal{U}(x)=5^3x^2(x^2+5)^2(x^6+4x^4-8x^2+12).
\]
Moreover, as it is explained in Remark \ref{re:remark},
$\operatorname{sign}(R'(0))$ can be replaced by $+1,$ because near
$0$, $R'$ is positive.

Taking $G(x)={5\sqrt5} x,$ we get that
\[
\phi(x)=\int_0^x \frac{s^2}{\sqrt{s^6+4s^4-8s^2+12\,}}\, {\rm d}
s\quad\mbox{and}\quad \varphi(q)=\int_0^q\frac{1}{5\sqrt{t^2+108\,}}
\, {\rm d} t.
\]
Hence, by introducing the new variable $u=s^2,$ the equality
$\phi(x)=\varphi(q)$ writes as
\[
\int_0^{x^2} \frac{u}{\sqrt{u(u^3+4u^2-8u+12)\,}}\, {\rm d}
u=\int_0^q\frac{2}{5\sqrt{t^2+108\,}} \, {\rm d} t.
\]
This expression is precisely the one obtained in \cite{be} and gives
a root of the considered quintic equation in terms of elementary
functions and the inverse of an elliptic integral.

\section{Polynomials and  Abel equations: proof of Theorem \ref{teo2}}\label{se:abel}

This section is devoted to prove Theorem \ref{teo2} and its
corollary.

By Corollary \ref{coro3}, any branch of solutions $x=x(q)$ of
$P(x)=R(x)-q=0$ passing by $(x,q)=(0,0)$  satisfies the differential
equation
\begin{equation}\label{eq:edobis}
x'=\frac {R'(x)U(x)}{D(q)},
\end{equation}
where $R'(x)U(x)$ is a polynomial in $x$ of degree  $(n-1)^2$ and
$D(q)$ is a polynomial in~$q$ of degree  $n-1.$ By dividing $R'U$ by
$P$ we get that $R'(x)U(x)=P(x)Q(x)+W(x),$ where $W$ is a polynomial
in $q$ and $x$ of degree at most $n-1$ in this last variable. That
is,
\[
W(x)=\sum_{j=0}^{n-1} w_j(q) x^j,
\]
where the functions $w_j(q)$ are polynomials in $q.$ Hence, since
$P(x(q))\equiv0,$ when $x=x(q)$ it holds that
\begin{equation}\label{eq:edofinal}
x'=\frac {R'(x)U(x)}{D(q)}=\frac {P(x)Q(x)+W(x)}{D(q)}=\frac
{W(x)}{D(q)}=\sum_{j=0}^{n-1} \frac{w_j(q)}{D(q)}
x^j=\sum_{j=0}^{n-1} a_j(q) x^j,
\end{equation}
as we wanted to prove.

Let us detail the corresponding ODE \eqref{eq:edofinal} for
$n=2,3,4.$

For the quadratic equation $P(x)=x^2+px-q=0,$ we have that
\[
D(q)=p^2+4q\quad\mbox{and}\quad  U(x)=1.
\]
Hence $x(q)$ satisfies \eqref{eq:edobis},
\begin{equation}\label{eq:n=1gen}
x'=\frac {R'(x)U(x)}{D(q)}= \frac{2x+p}{p^2+4q}= \frac{2}{p^2+4q} x+
\frac{p}{p^2+4q},
\end{equation}
that is already a linear ODE. Its general solution is
\[
x(q)=\frac{-p+K \sqrt{p^2+4q\,}}2.
\]
By imposing the initial condition $x(0)=0$ we arrive again to the
babylonian solution
\[
x(q)=\frac{-p+\operatorname{sgn}(p)\sqrt{p^2+4q\,}}2.
\]

For the cubic equation $P(x)=x^3+px-q=0,$
\[
D(q)=-(4p^3+27q^2)\quad\mbox{and}\quad  U(x)=-(3x^2+4p).
\]
Since  $R'(x)U(x)=-(9x^4+15px^2+4p^2)=-9xP(x)-(6px^2+9qx+4p^2)$ and
$P(x(q))\equiv0,$ we have that $x=x(q)$ satisfies the Riccati
equation
\begin{equation}\label{eq:cubic}
x'= \frac{6px^2+9qx+4p^2}{4p^3+27q^2}=
\frac{6p}{4p^3+27q^2}x^2+\frac{9q}{4p^3+27q^2}x+\frac{4p^2}{4p^3+27q^2}.
\end{equation}

Finally, we consider the quartic equation $P(x)=x^4+px-q=0.$ Here we have
\[
D(q)=-(27p^4+256q^3)\quad\mbox{and}\quad U(x)=
-(16x^6+40px^3+27p^2)
\]
and
\begin{align*}
R'(x)U(x)&=-(64x^9+176px^6+148p^2x^3-27p^3)\\&=
-\big(64x^5+112px^2+64qx\big)P(x)-\big(36{p}^{2}{x}^{3}+48pq
{x}^{2}+64q^2 x+27{p}^{3}\big).
\end{align*}
Hence $x(q)$ satisfies the Abel ODE
\[
x'=\frac{36{p}^{2}}{27p^4+256q^3}x^3+\frac{48pq}{27p^4+256q^3}x^2
+\frac{64q^2}{27p^4+256q^3}x+\frac{27{p}^{3}}{27p^4+256q^3}.
\]

\section{Polynomials and linear ODE: proof of Theorem \ref{teo3}}\label{se:order-n}

We prove Theorem \ref{teo3} and we apply it to the low degree cases. We
recover again Cardano's formula, obtaining it as a particular
solution of the equation for the harmonic oscillator. We also get the solution of
quartic equations in terms of a generalized hypergeometric function that gives
an alternative expression to the classical algebraic one, also presented in
Section \ref{se:cuartic} of the Appendix.

We start proving the following simple lemma.

\begin{lemma}\label{le:ode-n} Let $x(q)$ be a solution of  the polynomial equation $P(x)=R(x)-q=0$ and consider
$v(q)={A(x(q),q)}/{D^m(q)},$ where $0<m\in\N,$
$D(q)=\operatorname{dis}_x (P(x))$ and  $A$ is a polynomial. Then
$v'(q)={B(x(q),q)}/{D^{m+1}(q)},$ for some new polynomial $B(x,q).$
\end{lemma}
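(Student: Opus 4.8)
The plan is to differentiate $v$ directly with the quotient and chain rules, and to keep the denominator under control by invoking Corollary~\ref{coro3}. Writing $x=x(q)$ and letting $A_x,A_q$ denote the partial derivatives of $A$, the chain rule gives $\frac{{\rm d}}{{\rm d}q}A(x(q),q)=A_x(x,q)\,x'(q)+A_q(x,q)$, so that
\[
v'(q)=\frac{\bigl(A_x(x,q)\,x'(q)+A_q(x,q)\bigr)D(q)^{m}-m\,A(x,q)\,D(q)^{m-1}D'(q)}{D(q)^{2m}}.
\]
The only non-polynomial ingredient on the right is $x'(q)$; here I would substitute the formula $x'=R'(x)U(x)/D(q)$ furnished by Corollary~\ref{coro3}, recalling that by Proposition~\ref{propo} the product $R'(x)U(x)$ is a genuine polynomial in $x$ (of degree $(n-1)^{2}$) whose coefficients are built from those of $R$.

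After this substitution I would clear the inner denominator $D(q)$ and cancel a common factor $D(q)^{m-1}$ from numerator and denominator, arriving at $v'(q)=B(x(q),q)/D(q)^{m+1}$ with
\[
B(x,q)=A_x(x,q)\,R'(x)\,U(x)+A_q(x,q)\,D(q)-m\,A(x,q)\,D'(q).
\]
Since $A$, $A_x$, $A_q$, $R'$, $U$, $D$ and $D'$ are all polynomials, $B$ is a polynomial in $x$ and $q$, which is exactly what the lemma asserts; one can even read off a crude bound on its degree in $x$ from the degrees of $A$, $R'U$ and $D$.

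I do not expect any real obstacle: once $x'$ is written in the correct form the statement is pure bookkeeping. The one point that must not be skipped is the polynomiality of $R'(x)U(x)$, and this is precisely where Proposition~\ref{propo} does the work. The naive relation $R'(x(q))\,x'(q)=1$ only yields $x'=1/R'(x)$, which is not polynomial and would leave an $R'(x)$ in a denominator rather than producing the clean power $D(q)^{m+1}$; rewriting $1/R'(x)$ as $R'(x)U(x)/D(q)$ via the identity $D(R(x))=(R'(x))^{2}U(x)$ is exactly the manoeuvre that lets the two denominators combine into a single power of $D(q)$.
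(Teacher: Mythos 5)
Your proposal is correct and follows essentially the same route as the paper: differentiate by the quotient and chain rules, substitute $x'=R'(x)U(x)/D(q)$ from Corollary~\ref{coro3}, and collect terms over $D(q)^{m+1}$, obtaining exactly the polynomial $B(x,q)=\dfrac{\partial A}{\partial x}R'(x)U(x)+\dfrac{\partial A}{\partial q}D(q)-mA(x,q)D'(q)$ that appears in the paper's proof. Your closing remark about why the naive $x'=1/R'(x)$ would not suffice is a sensible observation, though the paper does not dwell on it.
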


\begin{proof} We have that
\begin{align*}
v'(q)=\frac{\dfrac{\partial A(x(q),q)}{\partial x}x'(q)
+\dfrac{\partial A(x(q),q)}{\partial
q}}{{D^m(q)}}-m\frac{A(x(q),q)D'(q)}{D^{m+1}(q)}=
\frac{B(x(q),q)}{D^{m+1}(q)},
\end{align*}
where $B(x,q)=\dfrac{\partial A(x,q)}{\partial
x}R'(x)U(x)+\dfrac{\partial A(x,q)}{\partial q}D(q)-mA(x,q)D'(q)$,
we have used  Corollary \ref{coro3} and $U$ is the polynomial
appearing in its statement.
\end{proof}

\begin{proof}[Proof of Theorem \ref{teo3}]
We start as in the proof of Theorem \ref{teo2}, recalling that by
Corollary \ref{coro3} it holds that
\begin{equation*}
x'=\frac {R'(x)U(x)}{D(q)}=:\frac {c_1(x)}{D(q)},
\end{equation*}
where $c_1$ is a polynomial in $x$ of degree  $(n-1)^2.$ Notice that
applying re-iteratively Lemma \ref{le:ode-n}, defining $v=x^{(k)},
k=2,3,\ldots$ we obtain that
\begin{equation*}
x^{(k)}=\frac {C_k(x,q)}{D^k(q)},\quad 1<k\in\N,
\end{equation*}
where $C_k(x,q)$ are polynomials of increasing degrees in $x$, defined
recursively as
\[
C_{k+1}(x,q)=\frac{\partial C_{k}(x,q)}{\partial
x}R'(x)U(x)+\frac{\partial C_{k}(x,q)}{\partial
q}D(q)-kC_{k}(x,q)D'(q),
\]
and $C_1(x,q)=c_1(x).$ As in the proof of Theorem \ref{teo2}, we can
write
\[
C_k(x,q)=Q_k(x,q)P(x)+B_k(x,q),\quad 0<k\in\N,
\]
where each $B_k$ is a polynomial in $q$ and $x,$ of degree at most
$n-1$ in this last variable. Hence, $x=x(q)$ satisfies
\begin{equation}\label{eq:1tok}
x^{(k)}=\frac {B_k(x,q)}{D^k(q)}=\sum_{j=0}^{n-1} b_{k,j}(q) x^j,\quad
0<k\in\N.
\end{equation}
 For $k=1$ this ODE in the one of Abel type
given in Theorem \ref{teo2}.

Let us explain how to obtain a $(n-1)$-th order linear differential
equation by using~\eqref{eq:1tok} for $k=1,2,\ldots,n-1.$ In fact,
as a first step,  we  prove that these $n-1$ ODE can be transformed
into $n-2$ ODE, where their left hand sides are polynomials of
degree one in the variables $x',x'',\ldots, x^{(n-1)}$ and with
coefficients that are rational functions of $q$, while their right
hand sides continue being polynomials in $x$ but have decreased
their degrees to $n-2.$

 If at least $n-2$ of the functions $b_{n-1,j}$
$ j=1,2,\ldots,n-1,$ identically vanish, we are done. Otherwise, at
least two of them, say $b_{n-1,i}$ and $b_{n-1,\ell},$ are not
identically zero. Then, by computing  $
x^{(i)}/b_{n-1,i}(q)-x^{(\ell)}/b_{n-1,\ell}(q)$ we cancel the term
$x^{n-1}$ in the corresponding right hand side, obtaining one of the
new desired relations. Doing the same procedure with several couples
of relations \eqref{eq:1tok} satisfying that $b_{n-1,j}\ne0$ we
obtain the $n-2$ searched relations.

Starting from these new relations, combining them in a similar way,
we obtain, for each $m=3,\ldots,n-1,$ in each step $n-m$ ODE whose
right hand sides have degree $n-m$ in the variable $x.$ The last
step  of this procedure gives the desired linear differential
equation.
\end{proof}

Although the linear ODE given in Theorem \ref{teo3} can be obtained
in general, their expressions are  huge. To show some examples we
give these ODE for the particular case of trinomial polynomials
\begin{equation}\label{eq:tri}
P(x)=x^n+px-q=0,
\end{equation}
when $n=3,4,5,6.$ As we will see, in this case of trinomial polynomials the resulting ODE has a simple expression.

Notice also that is not difficult to see that for $n>1,$ near $q=0,$
the solution of~\eqref{eq:tri}  is
\[
x=x(q)= \frac1p\, q-\frac1{p^{n+1}}\,q^n+ o(q^n).
\]
Hence all the $(n-1)$-th order linear ODE that we will obtain
have to be solved with the initial conditions
\begin{equation}\label{eq:ic}
x(0)=0,\,\, x'(0)=\frac1p,\,\, x''(0)=x'''(0)=\cdots=x^{(n-2)}(0)=0.
\end{equation}

For $n=3,$ we already know  from \eqref{eq:cubic}  that
\begin{equation*}
x'=\frac{6p}{4p^3+27q^2}x^2+\frac{9q}{4p^3+27q^2}x+\frac{4p^2}{4p^3+27q^2}.
\end{equation*}
By using the procedure detailed in the proof of Theorem \ref{teo3}
we obtain that
\begin{equation*}
x''=-\frac{162pq}{(4p^3+27q^2)^2}x^2+\frac{12p^3-162q^2}
{(4p^3+27q^2)^2}x-\frac{108p^2q}{(4p^3+27q^2)^2}.
\end{equation*}
Hence, since $27qx'+(4p^3+27q^2)x''=3x,$ we arrive to the ODE
\begin{equation}\label{eq:n=3}
(4p^3+27q^2)x''+27qx'-3x=0.
\end{equation}

For the non trinomial case the associated ODE is non homogeneous in
general (see Remark \ref{re:nh}). Let us solve it. We introduce a
new independent variable $t,$ as $q=g(t),$ for some smooth function
$g$ invertible at $t=0$ and such that $g(0)=0.$ Then $y(t)=x(g(t))$
is a solution of a new second order linear ODE in $y(t).$
Straightforward computations give that the coefficient of $y''(t)$
for this new ODE is $(4p^3+27g^2(t))/(g'(t))^2.$ Hence, to find a
new simple ODE we impose that
\[
\frac{4p^3+27g^2(t)}{(g'(t))^2}=\operatorname{sgn}(p) 3 .
\]
Solving it we obtain that, when $p\ne0,$ one of its solutions
satisfying $g(0)=0$ and $g'(0)\ne0,$ is
\begin{equation*}
g(t)=\begin{cases} \dfrac{2\sqrt3}9 p\sqrt{p}\,
\operatorname{sinh}\left(3t\right),
&\mbox{when} \quad p>0, \\[0.4cm]
\dfrac{2\sqrt3}9 p\sqrt{-p}\, \sin\left(3t\right), &\mbox{when}
\quad p<0.
\end{cases}
\end{equation*}
In fact, taking these $g$'s we get that \eqref{eq:n=3} is
transformed into the simple equation
\[
y''-\operatorname{sgn}(p)\,y=0.
\]
Hence,  the general solution of \eqref{eq:n=3} is
\begin{equation*}
x(q)=\begin{cases} C_1\operatorname{sinh}\left(g^{-1}(q)\right)
+C_2\operatorname{cosh}\left(g^{-1}(q)\right),
&\mbox{when} \quad p>0, \\[0.4cm]
C_1\sin\left(g^{-1}(q)\right) +C_2\cos\left(g^{-1}(q)\right),
&\mbox{when} \quad p<0,
\end{cases}
\end{equation*}
where $C_1$ and $C_2$ are arbitrary constants. By imposing the initial
conditions \eqref{eq:ic} and computing $g^{-1}(q)$ we get
\begin{equation*}
x(q)=\begin{cases}
\dfrac{2\sqrt{p}}{\sqrt{3}}\operatorname{sinh}\left(
\dfrac13\operatorname{arcsinh} \left(\dfrac32\dfrac{\sqrt3
q}{p\sqrt{p}}\right) \right),
&\mbox{when} \quad p>0, \\[0.4cm]
\dfrac{2\sqrt{-p}}{\sqrt{3}}\sin\left( \dfrac13\arcsin
\left(\dfrac32\dfrac{\sqrt3 q}{p\sqrt{-p}}\right) \right),
&\mbox{when} \quad p<0,
\end{cases}
\end{equation*}
where the second equality takes real values only for
$|q|<\sqrt{-4p^3/27\,}.$ These expressions coincide with the ones
obtained in Section \ref{se:car}, see \eqref{eq:solu} and
\eqref{eq:solu22}, and lead us again to Cardano's formula.

The ODE obtained for $n=4,5$ and $6$ can be obtained similarly. We
skip the details and we only show the final results.

For $n=4,$
\begin{equation}\label{eq:n=4}
(27p^4+256q^3)x'''+1152q^2x''+688qx'-40x=0.
\end{equation}

For $n=5,$
\begin{equation*}\label{eq:n=5}
(256p^5+3125q^4)x''''+31250q^3x'''+73125q^2x''+31875qx'-1155x=0.
\end{equation*}

For $n=6,$
\begin{align*}\label{eq:n=6}
(3125p^6+46656q^5)x'''''&+816480
q^4x''''+4153680q^3x'''\nonumber\\&+6658200q^2x''+2307456qx'-57456x=0.
\end{align*}

\begin{remark}\label{re:nh} For general polynomials (not in trinomial form) it can be seen that
 the linear differential
equations given in Theorem \ref{teo3} are no more homogeneous. This
is already the case for polynomials of degree $2,$ see
\eqref{eq:n=1gen}. As an example we give it for $x^3+sx^2+px-q=0.$
The associated ODE is
\[
\big(4p^3+27q^2+18pqs-p^2s^2-4qs^3) x''+ \big(27q+9ps-2s^3\big)
x'-3x-s=0.
\]
 When $s=0$, the above ODE reduces to \eqref{eq:n=3}.

\end{remark}

\subsection{Again quartic equations}\label{se:444}
 To solve the quartic we have to  find the solution of \eqref{eq:n=4}
with the initial conditions given in \eqref{eq:ic}, that is
$x(0)=x''(0)=0$ and $x'(0)=1/p.$  By using Mathematica we arrive to
\[
x=x_1(q)=_3\!\!F_2\left(\frac14,\frac12,\frac34;\frac23,\frac43;-\frac{256q^3}{27p^4}\right)\frac
q p
\]
and with Maple we obtain
\[
x=x_2(q)=_2\!\!F_1\left(-\frac1{24},\frac5{24};\frac23;-\frac{256q^3}{27p^4}\right)\cdot\phantom{}
_2F_1\left(\frac7{24},\frac{13}{24};\frac43;-\frac{256q^3}{27p^4}\right)\frac
q p,
\]
where $_n F_m(\cdot;\cdot;x)$ are the classical hypergeometric
functions.\\
As an other result for the quartic, by a direct substitution it is easy to check that
\begin{equation*}
x=x_3(q)=\frac1{2w(q)} \left(\sqrt{2pw^3(q) -1\,}-1\right),
\end{equation*}
where $w=w(q)$ satisfies $-p^2w^6+4qw^4+1=0$  and
$w(0)=1/\sqrt[3]{p},$ solves it. Notice that this can be done
algebraically  because there is the algebraic relation between
$x_3(q)$ and $w(q),$  $(2x_3w+1)^2=2pw^3-1,$ and $w(q)$ also
satisfies a bi-cubic algebraic equation. See Section
\ref{se:cuartic}, and in particular  \eqref{eq:clau}, to understand
how we have obtained the expression   $x_3.$

In particular, by the uniqueness of solutions theorem, it holds that
$x_i(q)=x_j(q),$ for all $i,j\in\{1,2,3\},$ although it seems not
easy to prove these equalities without passing by the differential
equation. It is a challenge to extract the expression of the algebraic
solution of \eqref{eq:n=4} by using only the associated ODE.

\section{Appendix}

For completeness we include in this appendix some classical
approaches to solve cubic and quartic equations. While for the cubic
equations there is nothing new, the solutions of the quartic are given
in a form that is not the most commonly used, but that is very
practical and it is also suitable for our approach to the problem.

\subsection{Cubic equations}\label{ss:cla}

The cubic polynomial equations were solved during the XVI Century by
the Italian school and the protagonists  were Scipione del Ferro,
Niccol\`{o} Fontana (Tartaglia) and Gerolamo Cardano.

As usual,  the cubic polynomial equation $
y^3+by^2+cy+d=0,$ is  transformed into the simpler one
\begin{equation}\label{eq:cubica} x^3+px+q=0,
\end{equation}
for some suitable $p$ and $q,$
 by introducing the new variable  $x=y+\frac{b}{3}.$ We also
 consider that $p\ne0,$ because otherwise its solutions can be
 trivially found.

 We will recall two different well-known ways for solving it.
  We start with the most classical one.  We look for a solution of the form~$x=u+v$. Replacing it
in \eqref{eq:cubica} we get $u^3+v^3+q+(3uv+p)(u+v)=0.$ Now we
impose that  $u$ and $v$ simultaneously satisfy  $ u^3+v^3+q=0$ and
$3uv+p=0.$  By isolating $v$ from the second equation  and replacing
it into the first one we get that  $z=u^3$ satisfies
$z^2+qz-{p^3}/{27}=0.$ Solving this second degree equation and using
that $x=u-p/(3u)$ we arrive to the celebrated Cardano's formula,
$$
x=\root{3}\of{-\frac{q}{2}+\sqrt{\frac{q^2}{4}+\frac{p^3}{27}\,}\,}
-\dfrac{p}{3\root{3}\of{-\frac{q}{2}+\sqrt{\frac{q^2}{4}+\frac{p^3}{27}\,}}\,}.
$$

A different approach is due to Fran\c{c}ois Vi\`{e}te (Vieta). His starting
point  is the trigonometric  identity
\begin{equation}\label{eq:trig}
4\cos^3(\theta)-3\cos(\theta)-\cos(3\theta)=0.
\end{equation}
When $p<0,$ we perform in \eqref{eq:cubica} the change of variables
$x=u\cos(\theta),$ with $u=2\sqrt{-p/3\,}\,$ and multiply the
equation by $4/u^3.$ We arrive to \[
4\cos^3(\theta)-3\cos(\theta)-\frac{3q}{2p}\sqrt{\frac{-3}{p}\,}=0.
\]
Hence, by using \eqref{eq:trig}, when
$\left|\frac{3q}{2p}\sqrt{\frac{-3}{p}\,}\right|\le1,$  the three
solutions of the cubic equation can be obtained from
\[
x=2\sqrt{\frac{-p}3\,}\cos\left(\frac13\arccos
\left(\frac{3q}{2p}\sqrt{\frac{-3}{p}\,}\right)\right),
\]
taking the different values of the  $\arccos$ function. When the
inequality does not hold or $p>0,$ it is possible to consider the
extension of the  $\cos$ function to $\C$ or to use that the $\cosh$
function, $\cosh(x)=(\exp(x)+\exp(-x))/2,$ also satisfies
\begin{equation*}
4\cosh^3(\theta)-3\cosh(\theta)-\cosh(3\theta)=0
\end{equation*}
and then use similar ideas to obtain the solutions of the cubic
equation.

\subsection{Quartic equations}\label{se:cuartic}

The quartic equation was solved by Ludovico Ferrari, only some few
years after the solution of the cubic one. Essentially its solution
is based on some tricks for completing squares that strongly use the
solution of the cubic equation. As we have already commented, we
present a simple and practical version of that approach that is also
suitable for our interests.

By a translation, any quartic equation can we  written as
\begin{equation}\label{eq:quar-general}
x^4+cx^2+dx+e=0, \quad d\ne0,
\end{equation}
where we discard the trivial case $d=0,$ because then the equation
can be easily solved.  Trying to get complete squares in both sides
we write it as
\[
x^4+(c+u^2)x^2+\frac{(c+u^2)^2}4= u^2x^2-dx-e+\frac{(c+u^2)^2}4,
\]
for some $u$ to be determined. Therefore it is natural to impose
that
\[
-e+\frac{(c+u^2)^2}4= \frac{d^2}{4u^2} \Longleftrightarrow
Q(u):=u^6+2cu^4+(c^2-4e)u^2-d^2=0.
\]
Since $d\ne0,$  any value $u$ satisfying  the above bi-cubic
equation is non-zero. Hence for any such $u,$ equation
\eqref{eq:quar-general} writes as
\[
\left(x^2+\frac{c+u^2}2\right)^2=\left(ux-\frac d{2u} \right)^2.
\]
Then the solutions of \eqref{eq:quar-general} coincide with the
solutions of the two quadratic equations
\[
x^2+\frac{c+u^2}2=\pm\left(ux-\frac d{2u} \right).
\]
By solving them we obtain that the four solutions of
\eqref{eq:quar-general} are
\[
\frac u 2 \left( 1\pm\sqrt{-\frac{2d}{u^3}
-\frac{2c}{u^2}-1\,}\right),\quad -\frac u 2 \left( 1\pm
\sqrt{\frac{2d}{u^3} -\frac{2c}{u^2}-1\,}\right),
\]
where $u$ is any solution of $Q(u)=0.$ Taking $w=1/u,$ them can also
be written  as
\[
\frac 1{2w} \left( 1\pm\sqrt{-2d w^3 -2c w^2-1\,}\right),\quad
-\frac 1{2w} \left( 1\pm\sqrt{2d w^3 -2c w^2-1\,}\right),
\]
where $w$ is any solution of the bi-cubic equation
\[-d^2w^6+(c^2-4e)w^4+2cw^2+1=0.\]

In particular, for the trinomial quartic equation $x^4+px-q=0,$ the
solution $x(q)$ that tends to $0$ when $q$ also goes to zero is
\begin{equation}\label{eq:clau}
x(q)=\frac1{2w} \left(\sqrt{2pw^3 -1\,}-1\right),
\end{equation}
where $w=w(q)$ satisfies $-p^2w^6+4qw^4+1=0$  and
$w(0)=1/\sqrt[3]{p}.$

In fact,  a simple a posteriori proof that the solution $x^4+px-q=0$
is given in \eqref{eq:clau} can be done as follows: write
\eqref{eq:clau} as
\[
S(x,w):= (2xw+1)^2+1-2pw^3=0
\]
and observe that
\[
\operatorname{Res}_w(S(x,w),-p^2w^6+4qw^4+1)=-4096p^2\big(x^4+px-q\big)^3,
\]
where $\operatorname{Res}_w$ denotes the resultant with respect to
$w,$ see \cite{St}. Hence, when $p\ne0,$ if $w$ satisfies
simultaneously $S(x,w)=0$ and $-p^2w^6+4qw^4+1=0,$ the
corresponding~$x$ is a zero of the quartic polynomial.


\subsection*{Acknowledgements}
The first author is supported by Ministerio de Ciencia, Innovaci\'{o}n y
Universidades of the Spanish Government through grants
MTM2016-77278-P (MINECO/AEI/FEDER, UE) and by  grant 2017-SGR-1617
from AGAUR,  Generalitat de Catalunya.

\end{document}